\DeclareMathOperator{\inv}{\rm inv}
\newtheorem{thm}{Theorem}[section]
\newtheorem{cor}[thm]{Corollary}
\newtheorem{prop}[thm]{Proposition}
\theoremstyle{definition}
\newtheorem{ex}[thm]{Example}
\newtheorem{rem}[thm]{Remark}
\newtheorem{defi}[thm]{Definition}
\title{Non-Nudgable subgroups of permutations}
\author{Tim Netzer}
 \thanks{The author thanks Nick Netzer for bringing up the main question of this article, and both Steffen Kionke and Nick Netzer for inspiring discussions on the topic.}                                     
\begin{document}
\maketitle

\begin{abstract}
Motivated by a problem from behavioral economics, we study subgroups of permutation groups that have a certain strong symmetry. Given a fixed permutation, consider the set of all permutations with disjoint inversion sets. The group is called {\it non-nudgable}, if the cardinality of this set always remains the same when replacing the initial permutation with its inverse. It is called {\it nudgable } otherwise. We show that all full permutation groups, standard dihedral groups, half of the alternating groups, and any abelian subgroup are non-nudgable. In the right probabilistic sense, it is thus quite likely that a randomly generated subgroup is non-nudgable. However, the other half of the alternating groups are nudgable. We also construct a smallest possible nudgable group, a $6$-element subgroup of the permutation group on $4$ elements.
\end{abstract}

\section{Introduction}
Let $\mathcal X$ be a finite set, whose elements represent certain alternatives that an individual may choose from. A total linear order on $\mathcal X$ then represents the preferences of an individual as to the given alternatives. Let $\mathcal O(\mathcal X)$ be the set of all total orders on $\mathcal X$. Certain decisions processes from behavioral economics are modeled in this setup, see for example \cite{ne,ru}. Assume the real preferences of an individual are not known, only a subset $G\subseteq\mathcal O(\mathcal X)$ of possible preferences is revealed. Certain mechanisms may prompt the individual to behave like it has a certain preference from this set (although it is maybe not the real preference). Such methods are often subsumed under the notion of {\it nudging} \cite{th}. When designing such a mechanism, it might be interesting to see how far from the real preference  it makes the individual deviate.  Or, when deciding between two possible mechanisms, one would like to see which one will make the individual deviate less from his real preference.

So we would like to compare certain orders to each other, and in particular ask whether one order is closer to a  first than to a second one. For this let $\pi_1,\pi_2,\pi\in G$ and say that {\it $\pi$ is closer to $\pi_1$ than to $\pi_2$}, if whenever $\pi_1$ and $\pi_2$ order two alternatives differently, then $\pi$ orders them just as $\pi_1$ does. As a formula, where orders are understood as binary relations on $\mathcal X$, this means $$\pi_1\setminus\pi_2\ \subseteq\ \pi. $$ In this way we obtain two sets, namely the set $C_1$ of orders from $G$ that are closer to $\pi_1$ than to $\pi_2$, and the set $C_2$ of those orders that are closer to $\pi_2$ than to $\pi_1.$ Note that not every order necessarily belongs to either $C_1$ or $C_2$. The size of $C_1$ compared to the size of $C_2$ shows the probability that $\pi_1$ will inforce less deviations from the real preference than $\pi_2$. 

It has been observed in \cite{ne} that in many cases the two sets $C_1$ and $C_2$ are of the same cardinality,  meaning that no mechanism is predominant in pairwise comparison. This happens for many of the relevant models from the literature, see for example \cite{ne} and the references therein. Since the question is well-motivated  from the economical applications, it clearly asks for a systematic mathematical treatment. It can be formulated in terms of permutation groups and inversions, which we will explain in the following. Our main section then proves several results on whether subgroups of permutation groups fulfill this property of {\it non-nudgability}. Among them are all full permutation groups, half of the alternating groups, all standard dihedral groups and all abelian subgroups. But there are also groups which are {\it nudgable}, for example the other half of all alternating groups. The smallest example of a nudgable group is a 6-element subgroup of $\mathcal S_4$. In a suitable formulation, a randomly generated subgroup of $\mathcal S_n$ is more likely to be non-nudgable than nudgable, if $n$ is large.

\section{Preliminaries}

Let us first translate the initial problem from the introduction into a question about subgroups of permutations groups. For a general background on permutation groups we recommend \cite{di,ro} or any other introductory text to algebra.  Assume without loss of generality $\mathcal X=\{1,\ldots, n\}.$ Then  the set $\mathcal O(\mathcal X)$ of total orders on $\mathcal X$ can be identified with the permutation group $\mathcal S_n$; we will identify the total order   $i_1 < i_2 < \ldots < i_n$ with the permutation $$\pi=\left(\begin{array}{cccc}1 & 2 & \cdots & n \\i_1 & i_2 & \cdots & i_n\end{array}\right),$$ i.e. $\pi$ maps $j$ to $i_j$. Any subset $G\subseteq\mathcal O(\mathcal X)$ then becomes a subset $G\subseteq \mathcal S_n$. In most of the applications, the set $G$ will even be a subgroup (see for example \cite{ne} and the references therein). Since this assumption is natural from a mathematical point of view as well, we will restrict to subgroups from now in.  Now let $\pi_1,\pi_2,\pi\in\mathcal O(\mathcal X)$ and consider the condition $\pi_1\setminus\pi_2\subseteq \pi.$ It means $$i<_{\pi_1}j\quad \wedge\quad   j<_{\pi_2} i \quad \Rightarrow \quad i<_\pi j$$ or, in the language of permutations, \begin{equation*}\label{per}\pi_1^{-1}(i)<\pi_1^{-1}(j) \quad\wedge\quad \pi_2^{-1}(j)<\pi_2^{-1}(i)\quad \Rightarrow\quad \pi^{-1}(i)<\pi^{-1}(j).\end{equation*} By setting $k=\pi_1^{-1}(i)$ and $l:=\pi_1^{-1}(j)$ this becomes \begin{equation}\label{per2}  k<l \quad\wedge\quad \pi_2^{-1}(\pi_1(l)) < \pi_2^{-1}(\pi_1(k)) \quad\Rightarrow\quad \pi^{-1}(\pi_1(k))<\pi^{-1}(\pi_1(l)) .\end{equation}
Now recall the notion of an {\it inversion} of a permutation $\sigma$. It is a pair whose order is   reversed by $\sigma$: $${\rm inv}(\sigma):=\left\{ (i,j)\mid 1\leq i<j\leq n, \sigma(j)<\sigma(i)\right\}.$$ Now equation (\ref{per2}) becomes $$\inv(\pi_2^{-1}\pi_1)\cap \inv(\pi^{-1}\pi_1)=\emptyset.$$
With this formulation we define the sets $C_1,C_2$  from above as follows: $$C_1=\left\{ \pi\in G\mid \inv(\pi_2^{-1}\pi_1)\cap \inv(\pi^{-1}\pi_1)=\emptyset\right\}$$ and 
$$C_2=\left\{ \pi\in G\mid \inv(\pi_1^{-1}\pi_2)\cap \inv(\pi^{-1}\pi_2)=\emptyset\right\}.$$ Now finally note that $\pi_1^{-1}\pi_2$ is the inverse of $\pi_2^{-1}\pi_1$, and with $\pi$ running through the full  subgroup $G$, so do $\pi^{-1}\pi_1$ and $\pi^{-1}\pi_2.$ So we set 
$$ D_G(\pi):=\left\{ \sigma\in G\mid {\inv}(\sigma)\cap \inv(\pi)=\emptyset\right\}$$ and define:

\begin{defi} A  subgroup $G\subseteq \mathcal S_n$ is called  {\it non-nudgable} if for all $\pi\in G$ we have
$$\vert D_G(\pi)\vert =\vert D_G(\pi^{-1})\vert.$$ Otherwise $G$ is called {\it nudgable}.
\end{defi}

\section{Main Results}

Our first result is a straightforward observation, but will already cover a large class of groups. For this let $$\omega_0=\left(\begin{array}{cccc}1 & 2 & \cdots & n \\n & n-1 & \cdots  & 1\end{array}\right)\in\mathcal S_n$$ denote the permutation that has all ordered pairs as inversions.

\begin{thm}\label{thm1}
If $G\subseteq \mathcal S_n$ is a subgroup that contains $\omega_0$, then $G$ is non-nudgable.
\end{thm}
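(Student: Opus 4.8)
The plan is to turn the disjointness condition defining $D_G(\pi)$ into an order condition by multiplying with $\omega_0$. Write $T:=\inv(\omega_0)=\{(i,j):1\le i<j\le n\}$ for the set of all pairs. First I would record the elementary identity
$$\inv(\omega_0\sigma)=T\setminus\inv(\sigma)\qquad(\sigma\in\mathcal S_n),$$
which holds because $\omega_0$ reverses every value, so $(\omega_0\sigma)(i)>(\omega_0\sigma)(j)$ exactly when $\sigma(i)<\sigma(j)$; thus $\omega_0$ sends inversions to non-inversions and back.

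Since $\omega_0\in G$, left multiplication $\sigma\mapsto\omega_0\sigma$ is a bijection of $G$, and by the identity $\inv(\sigma)\cap\inv(\pi)=\emptyset$ is equivalent to $\inv(\pi)\subseteq T\setminus\inv(\sigma)=\inv(\omega_0\sigma)$. Hence this bijection carries $D_G(\pi)$ onto $\{\rho\in G:\inv(\pi)\subseteq\inv(\rho)\}$, the principal up-set of $\pi$ in the weak order (inversion containment) restricted to $G$. Therefore $|D_G(\pi)|$ equals the number of elements of $G$ weakly above $\pi$, and likewise for $\pi^{-1}$, so the theorem is equivalent to the assertion that this up-set count is unchanged when $\pi$ is replaced by $\pi^{-1}$.

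To push this through I would extract two more consequences of $\omega_0\in G$. Conjugation $\sigma\mapsto\omega_0\sigma\omega_0$ acts on pairs by the fixed involution $(i,j)\mapsto(n+1-j,n+1-i)$ of $T$, so it is a weak-order automorphism preserving $G$; this already yields $|D_G(\pi)|=|D_G(\omega_0\pi\omega_0)|$. Moreover $\sigma\mapsto\omega_0\sigma$ is order-reversing, so the weak order on $G$ is self-dual. The remaining, and main, difficulty is to convert these symmetries into invariance under $\pi\mapsto\pi^{-1}$. Here conjugation is not enough: in general $\omega_0\pi\omega_0\neq\pi^{-1}$ (already for a transposition), so it produces only the conjugate of $\pi$. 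The essential obstruction is that the inverse map does not act on inversion sets through any fixed permutation of $T$: the sets $\inv(\pi)$ and $\inv(\pi^{-1})$ are matched only by the $\pi$-dependent correspondence $\{i,j\}\mapsto\{\pi(i),\pi(j)\}$, and taking inverses interchanges the left and right weak orders.

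I would therefore aim to build a bijection between the up-sets of $\pi$ and $\pi^{-1}$ directly, combining the inverse map (which carries the right weak order on $G$ to the left weak order) with the reversal by $\omega_0$ and its self-duality (which should carry it back), the crucial point being that $\omega_0\in G$ keeps the resulting composite inside $G$. I expect the hard step to be precisely the verification that some such composite both preserves up-set membership and maps $G$ to itself; the naive single-operation candidates (left or right multiplication by $\omega_0$, conjugation, or the bare inverse) each fail this, so a genuine combination tailored to $\pi$ seems necessary.
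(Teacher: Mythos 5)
Your reduction is sound and matches the paper's first step: since $\omega_0\in G$ and $\inv(\omega_0\sigma)=T\setminus\inv(\sigma)$, membership $\sigma\in D_G(\pi)$ is equivalent to $\inv(\pi)\subseteq\inv(\omega_0\sigma)$, so $|D_G(\pi)|$ counts the elements of $G$ above $\pi$ in the weak order. But from there the proposal stops: you correctly observe that conjugation by $\omega_0$, the bare inverse, and single multiplications by $\omega_0$ do not do the job, and you end by announcing that ``a genuine combination tailored to $\pi$ seems necessary'' without producing one. That combination is the entire content of the theorem, so as written this is a plan with the key step missing, not a proof.

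The missing map is simpler than your discussion suggests: it is $\sigma\mapsto\omega_0\sigma\pi^{-1}$, i.e.\ your $\omega_0$-flip composed with right multiplication by $\pi^{-1}$ (an operation absent from your list of candidates; note it needs no hypothesis beyond $\omega_0\in G$, since $\pi^{-1}\in G$ automatically). The verification is two lines. First, by your own identity, $\inv(\omega_0\sigma\pi^{-1})\cap\inv(\pi^{-1})=\emptyset$ is equivalent to $\inv(\pi^{-1})\subseteq\inv(\sigma\pi^{-1})$. Second, if $(i,j)\in\inv(\pi^{-1})$ with $i<j$, put $a=\pi^{-1}(j)<b=\pi^{-1}(i)$; then $(a,b)\in\inv(\pi)$, so $\sigma\in D_G(\pi)$ forces $(a,b)\notin\inv(\sigma)$, i.e.\ $\sigma(\pi^{-1}(j))<\sigma(\pi^{-1}(i))$, which is exactly $(i,j)\in\inv(\sigma\pi^{-1})$. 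This is the $\pi$-dependent correspondence $\{i,j\}\mapsto\{\pi^{-1}(i),\pi^{-1}(j)\}$ you identified as the obstruction, used here as the mechanism. Finally, the analogous map for $\pi^{-1}$, namely $\tau\mapsto\omega_0\tau\pi$, is a two-sided inverse, so $|D_G(\pi)|=|D_G(\pi^{-1})|$. In short: your setup is the right one, but the theorem's proof is the explicit bijection you declined to construct, and it requires no further structural input about the weak order.
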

\begin{proof} We prove that for $\sigma\in D_G(\pi)$ we have $\omega_0\sigma\pi^{-1}\in D_G(\pi^{-1})$. Then clearly
 the mapping \begin{align*}D_G(\pi)&\to D_G(\pi^{-1})\\ \sigma&\mapsto \omega_0\sigma\pi^{-1}\end{align*} is a bijection. Since multiplying with $\omega_0$ from the left just exchanges inversions and non-inversions, it remains to prove that $$\inv(\pi^{-1})\subseteq \inv(\sigma\pi^{-1}).$$ But this is clear, since $(i,j)\in\inv(\pi^{-1})$ just means $(\pi^{-1}(j),\pi^{-1}(i))\in\inv(\pi)$, and thus $(\pi^{-1}(j),\pi^{-1}(i))\notin\inv(\sigma)$.\end{proof}

\begin{cor}\label{cor1} The following subgroups of $\mathcal S_n$ are non-nudgable:
\begin{itemize}
\item[(i)] The full group $\mathcal S_n$.
\item[(ii)] The alternating group $\mathcal A_n$ if $\lfloor \frac{n}{2} \rfloor $ is even.
\item[(iii)] The dihedral group $\mathcal D_n,$ generated by $\omega_0$ and the cycle $(12\cdots n)$.
\end{itemize}
\end{cor}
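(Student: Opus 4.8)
The plan is to reduce all three statements to Theorem \ref{thm1}: since that theorem guarantees non-nudgability as soon as a subgroup contains $\omega_0$, it suffices in each case simply to verify that $\omega_0$ belongs to the subgroup in question. Two of the three cases are then immediate. The full group $\mathcal S_n$ trivially contains $\omega_0$, and the dihedral group $\mathcal D_n$ contains $\omega_0$ by its very definition as the group generated by $\omega_0$ and the $n$-cycle $(12\cdots n)$. So for (i) and (iii) there is nothing further to do.

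The only case requiring genuine work is (ii), where membership $\omega_0\in\mathcal A_n$ is no longer automatic and must be decided by a parity computation. First I would record that $\omega_0$ reverses every ordered pair, so $\inv(\omega_0)$ consists of all pairs and hence has cardinality $\binom{n}{2}=\frac{n(n-1)}{2}$. Since the sign of a permutation equals $(-1)$ raised to the number of its inversions, $\omega_0$ is an even permutation---and thus lies in $\mathcal A_n$---exactly when $\frac{n(n-1)}{2}$ is even. Theorem \ref{thm1} then applies to $\mathcal A_n$ in precisely these cases.

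The remaining step, and the only place where one must be slightly careful, is to match this parity condition with the hypothesis stated in the corollary. Here I would write $n=4k+r$ with $r\in\{0,1,2,3\}$ and check the four residue classes directly: $\frac{n(n-1)}{2}$ turns out to be even exactly when $r\in\{0,1\}$, and this is in turn equivalent to $\lfloor n/2\rfloor$ being even (since $\lfloor n/2\rfloor=2k$ for $r\in\{0,1\}$ and $\lfloor n/2\rfloor=2k+1$ for $r\in\{2,3\}$). I do not expect any real obstacle; the only mild subtlety is this final translation between the arithmetic condition on $\binom{n}{2}$ and the floor condition on $n$, which is a short case check modulo $4$.
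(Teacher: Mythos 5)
Your proposal is correct and is exactly the intended argument: the paper derives all three cases from Theorem \ref{thm1} by checking $\omega_0\in G$, with the only nontrivial point being that $\omega_0$ has $\binom{n}{2}$ inversions and hence lies in $\mathcal A_n$ precisely when $\lfloor n/2\rfloor$ is even. Your mod-$4$ case check of that equivalence is accurate.
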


\begin{prop}\label{an}
If $n\geq 6$ and $\lfloor\frac{n}{2}\rfloor$ is odd, then $\mathcal A_n$ is nudgable.
\end{prop}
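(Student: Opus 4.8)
The plan is to re-use the bijection already built in Theorem~\ref{thm1}, but to keep track of how it interacts with the sign of a permutation. Recall that $\ell(\omega_0)=\binom{n}{2}$, and a quick parity check shows $\binom{n}{2}$ is odd exactly when $n\equiv 2,3\pmod 4$, i.e. exactly when $\lfloor n/2\rfloor$ is odd. So under our hypothesis $\omega_0$ is an \emph{odd} permutation. Now fix an even $\pi\in\mathcal A_n$. The map $\sigma\mapsto\omega_0\sigma\pi^{-1}$ is still a bijection $D_{\mathcal S_n}(\pi)\to D_{\mathcal S_n}(\pi^{-1})$, but since $\mathrm{sgn}(\omega_0\sigma\pi^{-1})=\mathrm{sgn}(\omega_0)\mathrm{sgn}(\sigma)=-\mathrm{sgn}(\sigma)$, it now \emph{reverses} parity. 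Hence it restricts to a bijection from the even elements of $D_{\mathcal S_n}(\pi)$ onto the odd elements of $D_{\mathcal S_n}(\pi^{-1})$. Writing $a(\pi)$ and $b(\pi)$ for the number of even, respectively odd, permutations $\sigma$ with $\inv(\sigma)\cap\inv(\pi)=\emptyset$, this yields $a(\pi)=b(\pi^{-1})$ and $b(\pi)=a(\pi^{-1})$.

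Consequently $|D_{\mathcal A_n}(\pi)|=a(\pi)$ while $|D_{\mathcal A_n}(\pi^{-1})|=a(\pi^{-1})=b(\pi)$, so
$$|D_{\mathcal A_n}(\pi)|-|D_{\mathcal A_n}(\pi^{-1})|=a(\pi)-b(\pi)=\sum_{\inv(\sigma)\cap\inv(\pi)=\emptyset}\mathrm{sgn}(\sigma).$$
Thus, to prove nudgability it suffices to exhibit a single even $\pi$ for which this signed count is nonzero. I would reformulate the summation condition via $\overline{\inv(\pi)}=\inv(\omega_0\pi)$: indeed $\inv(\sigma)\cap\inv(\pi)=\emptyset$ holds iff $\inv(\sigma)\subseteq\inv(\omega_0\pi)$, i.e. iff $\sigma$ lies below $\tau:=\omega_0\pi$ in the right weak order. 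So the task becomes finding an \emph{odd} permutation $\tau$ (then $\pi=\omega_0\tau$ is even) whose weak-order interval $[\mathrm{id},\tau]$ is not sign-balanced.

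To produce such a $\tau$ I would use a fixed gadget on four points, embedded into $\mathcal S_n$. Take $\tau$ to act as $2413$ (one-line notation) on $\{1,2,3,4\}$ and as the identity on $\{5,\dots,n\}$; this is available since $n\geq 6>4$, and $\tau$ is an odd $4$-cycle. Because every pair meeting $\{5,\dots,n\}$ is a non-inversion of $\tau$, any $\sigma$ below $\tau$ must carry the values $5,\dots,n$ increasingly in the last $n-4$ positions, so $[\mathrm{id},\tau]$ is identified, signs included, with the corresponding interval in $\mathcal S_4$. A direct enumeration shows this interval consists of exactly the five permutations $1234,1324,2314,1423,2413$, with signs $+,-,+,+,-$, whose signed sum is $+1\neq 0$. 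Hence $a(\pi)-b(\pi)=1$ for $\pi=\omega_0\tau$, so $|D_{\mathcal A_n}(\pi)|\neq|D_{\mathcal A_n}(\pi^{-1})|$ and $\mathcal A_n$ is nudgable.

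The conceptual core, and the step I expect to be most delicate, is the parity-reversal observation combined with the choice of gadget: one must locate an interval that is genuinely sign-unbalanced. This is subtle precisely because the naive intervals are all balanced once their top element is odd — chains of odd length, Boolean lattices, and products of chains all have vanishing signed sum — so no gadget on fewer than four points can succeed. That is exactly why the statement needs room for a four-point pattern and, simultaneously, $\omega_0$ odd; the smallest $n$ meeting both constraints is $n=6$. The remaining routine verifications are that the block-preservation argument correctly identifies $[\mathrm{id},\tau]$ with the $\mathcal S_4$ interval and that signs are preserved under the embedding.
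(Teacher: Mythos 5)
Your argument is correct, and it takes a genuinely different route from the paper. The paper simply exhibits the even permutation $\pi$ whose one-line notation is $n,n-1,\ldots,5,3,1,4,2$ and enumerates both sets by hand, finding $\vert D_{\mathcal A_n}(\pi)\vert=3$ and $\vert D_{\mathcal A_n}(\pi^{-1})\vert=2$. You instead observe that when $\lfloor n/2\rfloor$ is odd the bijection $\sigma\mapsto\omega_0\sigma\pi^{-1}$ of Theorem \ref{thm1} reverses parity on $\mathcal S_n$ (for even $\pi$), so that
$$\vert D_{\mathcal A_n}(\pi)\vert-\vert D_{\mathcal A_n}(\pi^{-1})\vert=\sum_{\sigma:\,\inv(\sigma)\subseteq\inv(\omega_0\pi)}{\rm sgn}(\sigma),$$
reducing the proposition to finding an odd $\tau=\omega_0\pi$ whose inversion-order interval is not sign-balanced; your $2413$ gadget does this, and your block argument correctly confines the interval to the embedded $\mathcal S_4$. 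I checked the enumeration: the interval is $\{1234,1324,2314,1423,2413\}$ with signed sum $1$, so the conclusion follows. What your approach buys is an explanation of \emph{why} the parity of $\lfloor n/2\rfloor$ is the right dividing line (it is exactly the parity of $\omega_0$, which decides whether the Theorem \ref{thm1} bijection preserves or swaps the cosets of $\mathcal A_n$), plus a reusable signed-sum criterion; the paper's computation is shorter and needs no auxiliary machinery. Amusingly, the two examples are essentially the same: composing the paper's $\pi$ with $\omega_0$ yields the $2413$ pattern sitting on the last four letters rather than the first four, and both computations ultimately see the same five-element set split $3+2$.
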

\begin{proof}
Consider the permutation $$\pi=\left(\begin{array}{cccccccccc}1 & 2 & 3 & \cdots &n-5& n-4 & n-3 & n-2 & n-1 & n \\n & n-1 & n-2 & \cdots &6& 5 & 3 & 1 & 4 & 2\end{array}\right).$$ The only  ordered pairs that are not inversions of $\pi$ are $$(n-3,n-1),\ (n-2,n-1),\ (n-2,n).$$ From this we see that $\pi$ belongs to $\mathcal A_n$ precisely in the cases considered here. It is now also not hard to explicitly compute 
$$D_{\mathcal A_n}(\pi)=\left\{{\rm id},(n-2,n,n-1),(n-3,n-2,n-1) \right\}.$$ On the other hand, the only ordered pairs that are not inversions of $\pi^{-1}$ are $$(1,2),\ (1,4),\ (3,4).$$ From this we compute $$D_{\mathcal A_n}(\pi^{-1})=\left\{ {\rm id}, (12)(34)\right\}.$$ So $\vert D_{\mathcal A_n}(\pi)\vert =3 > 2=\vert D_{\mathcal A_n}(\pi^{-1})\vert.$
\end{proof}

%

For our next result we will need the following straightforward observations:  $$\inv(\pi^{-1})=\pi\inv(\pi)$$  $$\inv(\sigma\pi)=\inv(\pi)\ \triangle\  \pi^{-1}\inv(\sigma)$$ where $\triangle$ denotes the symmetric difference. Here we use the notation $\sigma\inv(\pi)$ for  the set we obtain by applying $\sigma$ to both entries of each $(i,j)$ from $\inv(\pi)$, and permute (if necessary) to make the first entry smaller than the second one.

\begin{thm}\label{abel}
Any abelian subgroup $G\subseteq \mathcal S_n$ is non-nudgable. \end{thm}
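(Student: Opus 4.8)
The plan is to reduce the disjointness condition defining $D_G$ to a statement about the \emph{length} $\ell(\sigma):=\vert\inv(\sigma)\vert$, and then to let commutativity enter through the single elementary fact that in an abelian group $\sigma\pi^{-1}$ and $\sigma^{-1}\pi$ are mutually inverse. First I would pass to cardinalities in the two identities stated just before the theorem. Since applying a fixed permutation to the entries of every pair is a bijection of the set of all ordered pairs, the identity $\inv(\pi^{-1})=\pi\inv(\pi)$ immediately gives $\ell(\pi^{-1})=\ell(\pi)$, and $\vert\pi^{-1}\inv(\sigma)\vert=\ell(\sigma)$. Taking cardinalities in $\inv(\sigma\pi)=\inv(\pi)\triangle\pi^{-1}\inv(\sigma)$ then yields $$\ell(\sigma\pi)=\ell(\sigma)+\ell(\pi)-2\,\vert\inv(\pi)\cap\pi^{-1}\inv(\sigma)\vert.$$

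The key reformulation, which I expect to be the crux of the whole argument, is to read off from this that $\ell(\sigma\pi)=\ell(\sigma)+\ell(\pi)$ holds exactly when $\inv(\pi)\cap\pi^{-1}\inv(\sigma)=\emptyset$; applying the bijection $\pi$ to both sets and using $\pi\inv(\pi)=\inv(\pi^{-1})$ rewrites this as $\inv(\pi^{-1})\cap\inv(\sigma)=\emptyset$. Replacing $\pi$ by $\pi^{-1}$ (and using $\ell(\pi^{-1})=\ell(\pi)$) gives the description $$\sigma\in D_G(\pi)\iff\inv(\sigma)\cap\inv(\pi)=\emptyset\iff\ell(\sigma\pi^{-1})=\ell(\sigma)+\ell(\pi).$$ This length–additivity form is the heart of the proof: unlike the original intersection condition, it is visibly compatible with inversion of group elements.

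Now the abelian hypothesis enters, and it is all that is needed. For commuting $\sigma,\pi\in G$ one has $\sigma^{-1}\pi=\pi\sigma^{-1}=(\sigma\pi^{-1})^{-1}$, whence $\ell(\sigma^{-1}\pi)=\ell(\sigma\pi^{-1})$ by invariance of $\ell$ under inversion. Applying the reformulation to $\sigma^{-1}$ and $\pi^{-1}$ gives that $\sigma^{-1}\in D_G(\pi^{-1})$ iff $\ell(\sigma^{-1}\pi)=\ell(\sigma^{-1})+\ell(\pi^{-1})=\ell(\sigma)+\ell(\pi)$, which by the previous sentence is equivalent to $\ell(\sigma\pi^{-1})=\ell(\sigma)+\ell(\pi)$, i.e.\ to $\sigma\in D_G(\pi)$. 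Hence $\sigma\mapsto\sigma^{-1}$ restricts to a bijection $D_G(\pi)\to D_G(\pi^{-1})$, and $\vert D_G(\pi)\vert=\vert D_G(\pi^{-1})\vert$ follows.

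I would stress where the difficulty really lies. The naive attempt $\sigma\mapsto\sigma^{-1}$ cannot possibly work for arbitrary groups, since $\sigma^{-1}\in G$ always holds and yet nudgable groups exist (Proposition \ref{an}); so the inverse map must fail to respect $D_G$ in general. The whole point is therefore to find the reformulation that exposes when this map is admissible, and to recognize that commutativity is exactly the hypothesis that forces $\ell(\sigma\pi^{-1})=\ell(\sigma^{-1}\pi)$. Once the length–additivity characterization of $D_G(\pi)$ is established, the abelian case is immediate; conversely, the characterization itself is the only genuinely computational step, and it is routine given the two supplied identities.
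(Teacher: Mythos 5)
Your proof is correct and is essentially the paper's argument in a cleaner packaging: the paper also uses the map $\sigma\mapsto\sigma^{-1}$, derives disjointness from the two supplied identities by comparing cardinalities of the two expressions for $\inv(\pi\sigma^{-1})$, and invokes commutativity exactly once to identify $\inv(\pi\sigma^{-1})$ with $\inv(\sigma^{-1}\pi)$. Your length-additivity characterization $\sigma\in D_G(\pi)\iff\ell(\sigma\pi^{-1})=\ell(\sigma)+\ell(\pi)$ is precisely that cardinality count, restated so that the role of commutativity (forcing $\ell(\sigma\pi^{-1})=\ell(\sigma^{-1}\pi)$) becomes transparent.
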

\begin{proof}
We claim that $\sigma\in D_G(\pi)$ implies $\sigma^{-1}\in D_G(\pi^{-1})$, which will prove the claim.
By definition, $\sigma\in D_G(\pi)$ means $\inv(\sigma)\cap\inv(\pi)=\emptyset,$ which implies $$\emptyset=\sigma \inv(\sigma)\cap \sigma \inv(\pi)=\inv(\sigma^{-1})\cap\sigma\inv(\pi).$$ From this we obtain $$\inv(\pi\sigma^{-1})=\inv(\sigma^{-1})\ \triangle\ \sigma\inv(\pi)=\inv(\sigma^{-1}) \ \dot\cup \ \sigma\inv(\pi).$$On the other hand, by commutativity, we obtain $$\inv(\pi\sigma^{-1})=\inv(\sigma^{-1}\pi)=\inv(\pi)\ \triangle\ \pi^{-1}\inv(\sigma^{-1}).$$ Now note that $\inv(\pi)$ and $\sigma\inv(\pi)$ have the same cardinality, and similar with $\pi^{-1}\inv(\sigma^{-1})$ and $\inv(\sigma^{-1})$. We conclude $\inv(\pi)\cap \pi^{-1}\inv(\sigma^{-1})=\emptyset,$ and so $$\emptyset=\pi\inv(\pi)\cap\inv(\sigma^{-1})=\inv(\pi^{-1})\cap\inv(\sigma^{-1}),$$ which means $\sigma^{-1}\in D_G(\pi^{-1})$.
\end{proof}

\begin{ex}
The groups $\mathcal A_2,\mathcal A_3$ are non-nudgable, since they are abelian. Thus we have fully classified nudgability of alternating groups.
\end{ex}

Before we give a list of more examples, we note how new non-nudgable subgroups can be constructed from known ones. The case of a product  $\mathcal S_{n_1}\times \cdots\times  \mathcal S_{n_r}$ is one of the important cases in \cite{ne}.

\begin{prop}\label{prod}
For  $n=n_1+\cdots +n_r$ decompose $$\{1,\ldots,n\}=\{1,\ldots, n_1\}\cup\{ n_1+1,\ldots, n_1+n_2\} \cup \cdots $$ and embed $\mathcal S_{n_1}\times \cdots\times  \mathcal S_{n_r}$ into $\mathcal S_{n}$ by letting $\mathcal S_{n_i}$ permute the numbers in the $i$-th subset.  If $G_i\subseteq \mathcal S_{n_i}$ are non-nudgable subgroups,  then so is  $$G_1\times\cdots\times G_r\subseteq \mathcal S_n.$$
\end{prop}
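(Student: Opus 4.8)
The plan is to reduce everything to one clean observation about how inversion sets interact with the block decomposition, and then let the product structure together with the non-nudgability of the factors finish the argument. Write a typical element of $G=G_1\times\cdots\times G_r$ as $\sigma=(\sigma_1,\ldots,\sigma_r)$, where $\sigma_i\in G_i$ permutes the $i$-th block $B_i=\{n_1+\cdots+n_{i-1}+1,\ldots,n_1+\cdots+n_i\}$ and fixes everything outside $B_i$. Let $\phi_i\colon\{1,\ldots,n_i\}\to B_i$ denote the unique order-preserving bijection, so that $\sigma$ restricted to $B_i$ is $\sigma_i$ transported along $\phi_i$.

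First I would show that the inversions of such a $\sigma$ are exactly the within-block inversions. Indeed, a pair $(a,b)$ with $a<b$ lying in distinct blocks $B_s$ and $B_t$ (say $s<t$) can never be an inversion: $\sigma$ maps $B_s$ to $B_s$ and $B_t$ to $B_t$, and every element of $B_s$ is smaller than every element of $B_t$, so $\sigma(a)<\sigma(b)$. Hence $\inv(\sigma)$ is the disjoint union, over $i$, of those inversions of $\sigma$ confined to $B_i$, and since $\phi_i$ preserves order these correspond precisely to $\inv(\sigma_i)$ computed inside $\mathcal S_{n_i}$.

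Next I would feed this into the definition of $D_G$. Because inversions living in distinct blocks are automatically distinct pairs, for $\pi=(\pi_1,\ldots,\pi_r)$ and $\sigma=(\sigma_1,\ldots,\sigma_r)$ the condition $\inv(\sigma)\cap\inv(\pi)=\emptyset$ holds if and only if $\inv(\sigma_i)\cap\inv(\pi_i)=\emptyset$ for every $i$. This yields the product decomposition
$$D_G(\pi)=D_{G_1}(\pi_1)\times\cdots\times D_{G_r}(\pi_r),$$
and therefore $\vert D_G(\pi)\vert=\prod_{i=1}^r\vert D_{G_i}(\pi_i)\vert$. Finally, since $\pi^{-1}=(\pi_1^{-1},\ldots,\pi_r^{-1})$ in the product and each factor $G_i$ is non-nudgable, we have $\vert D_{G_i}(\pi_i)\vert=\vert D_{G_i}(\pi_i^{-1})\vert$ for every $i$; the two products then agree term by term, giving $\vert D_G(\pi)\vert=\vert D_G(\pi^{-1})\vert$.

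The only real content — and the step I would be most careful about — is the first one: verifying that cross-block pairs are never inversions and that the within-block inversions of $\sigma$ match $\inv(\sigma_i)$ under the order-preserving identification $\phi_i$. Everything afterwards is bookkeeping with the product and is routine. I expect no genuine obstacle here, only the need to state the block identification precisely enough that the disjoint-union claim, and hence the factorization of $D_G(\pi)$, is unambiguous.
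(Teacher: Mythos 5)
Your proposal is correct and follows exactly the same route as the paper, which simply asserts the key identity $D_G(\pi)=D_{G_1}(\pi_1)\times\cdots\times D_{G_r}(\pi_r)$ as "clear"; you supply the block-inversion verification that justifies it. No issues.
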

\begin{proof}This is clear, since for $\pi=(\pi_1,\ldots,\pi_r)\in G:=G_1\times\cdots\times G_r$ we have $\pi^{-1}=(\pi_1^{-1},\ldots,\pi_r^{-1})$ and $$D_G(\pi)=D_{G_1}(\pi_1) \times \cdots \times D_{G_r}(\pi_r). $$
\end{proof}

\begin{rem}\label{rem1}
In Theorem \ref{thm1}, the condition $\omega_0\in G$ can clearly be weakened to the existence of some permutation $\tau\in G$, whose inversion set contains all other inversions sets of group elements. However, we where not able to produce an example of such a group, which does not arise as a product (as in Proposition \ref{prod}) of groups to which Theorem \ref{thm1} applies directly.
However, one can even weaken the condition to the following one: \begin{equation}\label{eq}\forall \pi\in G \ \exists \tau\in G\ \forall \sigma\in D_G(\pi)\colon\quad \sigma \inv(\pi)\subseteq\inv(\tau).\end{equation} In this case, a bijection from $D_G(\pi)$ to $D_G(\pi^{-1})$ is given by $$\sigma\mapsto \tau\sigma\pi^{-1},$$ as is proven similar to the proof of Theorem \ref{thm1}. One example of such a group, which is not covered by any of the other results, is given in Example \ref{ex} (iv) below.
\end{rem}

Now let us give more explicit examples:

\begin{ex}\label{ex}
(i) Any subgroup of $\mathcal S_1,\mathcal S_2$ and $\mathcal S_3$ is non-nudgable. In fact all nontrivial subgroups are abelian, so Corollary \ref{cor1} and Theorem \ref{abel} apply.

 (ii) Proposition \ref{prod} does not hold for ''non-diagonal'' embeddings. For $n\geq 4$ we embedd $\mathcal S_{n-1}$ into $\mathcal S_{n}$ by fixing $2$. This leads to a nudgable subgroup $G$. For the element $$\pi=\left(\begin{array}{ccccccccc}1 & 2 & 3 & 4 & \cdots & n-3 & n-2 & n-1 & n \\n & 2 & n-1 & n-2 & \cdots & 5 & 4 & 1 & 3\end{array}\right)$$ one computes, in a similar fashion as in Proposition \ref{an}: $$\vert D_{G}(\pi)\vert=2 > 1=\vert D_{G}(\pi^{-1})\vert.$$
For $n=4$  we obtain a $6$-element subgroup of $\mathcal S_4$ that is nudgable. Since all smaller subgroups are abelian, this is the smallest possible example of a nudgable subgroup.


(iii) The group $G\subseteq\mathcal S_5$ generated by $(12)(34)$ and $(15)(23)$ hat $10$ elements, is non-abelian, and does not contain an element with largest inversion set. So none of the above main results apply. However, condition (\ref{eq}) from Remark \ref{rem1} is fulfilled, and $G$ is thus non-nudgable. This can easily be verifed.
\end{ex}

\begin{rem}
If a subgroup $G\subseteq\mathcal S_n$ is generated by one permutation, it is abelian and thus non-nudgable by Theorem \ref{abel}. If two permutations are chosen independently uniformly at random, the probability that they generate $\mathcal S_n$ goes to $3/4$ for $n\to \infty$.  The probability that either $\mathcal S_n$ or $\mathcal A_n$ is generated even goes to $1$. These are  the main results of \cite{dix}. So the probability that a randomly generated subgroup of $\mathcal S_n$ is non-nudgable goes to at least $3/4$ for $n\to\infty.$
\end{rem}

\end{document}